\newcommand{\R}{\mathcal{R}}
\newcommand{\Z}{\mathcal{Z}}
\newcommand{\F}{\mathcal{F}}
\newcommand{\PA}{\mathcal{P}}
\newcommand{\M}{\mathcal{M}}
\newcommand{\I}{\mathcal{I}}
\newtheorem{theorem}{Theorem}[section]
\newtheorem{lemma}[theorem]{Lemma}
\newtheorem{remark}{Remark}
\journal{Operations Research Letters}
\begin{document}

\begin{frontmatter}



\title{Stochastic Approximation with Two Time Scales: The General Case}

\author[VB]{Vivek S. Borkar}
\ead{borkar.vs@gmail.com}
\address[VB]{Department of Electrical Engineering,
Indian Institute of Technology Bombay, Mumbai 400076, India}


\begin{abstract}
 Two time scale stochastic approximation is analyzed when the iterates on either or both time scales do not necessarily converge.

\end{abstract}




\begin{keyword}
stochastic approximation; two time scales; controlled Markov noise; invariant distributions; invariant sets



\end{keyword}

\end{frontmatter}



\section{Introduction}

Recall the two time scale stochastic approximation in $\R^d\times\R^s$ given by (see  \cite{Borkar2T}, also section 8.1 of \cite{BorkarBook}) 
\begin{equation*}
\begin{aligned}
x(n+1) &= x(n) + a(n)[h(x(n),y(n)) + M(n+1)], \\
y(n+1) &= y(n) + b(n)[g(x(n),y(n)) + M'(n+1)],
\end{aligned}
\end{equation*} 
where:
\begin{enumerate}
\item for $d,s\geq 1$, $h: \R^d\times\R^s \mapsto \R^d$, $g: \R^d\times\R^s \mapsto \R^s$ are Lipschitz,

\item $\{M(n)\}, \{M'(n)\}$ are martingale difference sequences with respect to the increasing $\sigma$-fields 
$$\F_n := \sigma(x(0),y(0),M(m),M'(m), m\leq n),$$
satisfying
\begin{equation}
E\left[\|M(n+1)\|^2 + \|M'(n+1)\|^2 | \F_n\right]  \leq \mathcal{K}\left(1 + \|x(n)\|^2 + \|y(n)\|^2\right) \label{mgbd}
\end{equation}
for $n\geq 0$, and,
\item  $a(n), b(n) \in (0,\infty)$ are step size sequences satisfying the Robbins-Monro conditions 
$$\sum_na(n) = \sum_nb(n) = \infty; \sum_n(a(n)^2+b(n)^2)<\infty,$$ 
and the additional requirement $b(n) = o(a(n))$.
\end{enumerate}

We shall assume that these iterates are stable, i.e.,
\begin{equation}
\sup_n(\|x(n)\|+\|y(n)\|)<\infty, \ \mbox{a.s.} \label{stable}
\end{equation}
This usually needs to be established separately, see, e.g., \cite{Laxmi}.\\

We shall take the `ODE approach' (for `Ordinary Differential Equation') to stochastic approximation. See \cite{Fradkov}, \cite{Ljung}, \cite{Meerkov}, for some early work, \cite{Benaim1}, \cite{Benaim2} for the state of the art, and \cite{Ben}, \cite{BorkarBook} for a textbook treatment. In this approach,  one views the above iterations as noisy discretizations of differential equations 
$$\dot{x}(t) = h(x(t),y(t)), \ \dot{y}(t) = g(x(t),y(t)),$$ 
resp., except that the condition $b(n) = o(a(n))$ implies that the latter ODE moves on a slower time scale. This makes the situation akin to the `singularly perturbed ODEs' 
$$\dot{x}(t) = h(x(t),y(t)), \ \dot{y}(t) = \epsilon g(x(t),y(t))$$ 
in the $0 < \epsilon\downarrow 0$ limit. Following the standard philosophy for analyzing such ODEs, $x(\cdot)$ sees $y(\cdot)$ as \textit{quasi-static}, i.e., with $y(t) \approx$ a constant $y$. Thus $\dot{x}(t) \approx h(x(t),y)$. Suppose the ODE $\dot{x}(t) = h(x(t),y)$
has a unique asymptotically stable equilibrium $\lambda(y)$ for a Lipschitz $\lambda(\cdot)$. Being on a slower time scale, $y(\cdot)$ in turn sees $x(\cdot)$ as \textit{quasi-equilibrated}, i.e., $x(t) \approx \lambda(y(t))$. Hence $\dot{y}(t) \approx g(\lambda(y(t)), y(t))$. Suppose the ODE $\dot{y}(t) = g(\lambda(y(t)), y(t))$ has a unique globally asymptotically stable equilibrium $y^*$. Then one expects $(x(n), y(n)) \to (\lambda(y^*),y^*)$ a.s., which is indeed the case (\cite{Borkar2T}, section 8.1 of \cite{BorkarBook}). In applications, the faster iterates often emulate a subroutine of the algorithm on a slower time scale, albeit concurrently updated. Because of this, the two time scale stochastic approximation has had many applications, see, e.g., \cite{Karmakar}, \cite{Konda}.

There are, however, situations when we do have a two time scale stochastic approximation scheme, but \textit{sans} unique asymptotically stable equilibria as above. Perhaps the simplest such instance is that of multiple equilibria in case one of the iterations is a stochastic gradient descent for a non-convex function. Other important examples can be found in reinforcement learning, e.g., \cite{Yaji}. This motivates the present study, which aims to derive a broad characterization of the asymptotic behavior of two time scale algorithms in the spirit of \cite{Benaim1}, \cite{Benaim2}.

\section{Preliminaries}

We shall consider more general iterates
\begin{equation*}
\begin{aligned}
x(n+1) &= x(n) + a(n)[h(x(n),y(n), Z(n))  + \ M(n+1)], \\
y(n+1) &=  y(n) + b(n)[g(x(n),y(n), Z(n))  + \ M'(n+1)],
\end{aligned}
\end{equation*}
where $\{Z(n)\}$ is the so called Markov noise taking values in a finite set $\Z$  and satisfying the condition: for 
$$\F_n := \sigma(x(0), y(0), M(m), M'(m), Z(m), m \leq n), n \geq 0,$$
one has
$$P(Z(n+1) \in A| \F_n) = p_{x(n),y(n)}(A|Z(n)), n \geq 0,$$
for any Borel set $A \in \Z$ and a parametrized transition kernel $p_{x,y}(\cdot|z)$ such that the map $(x,y) \in \R^d\times\R^s \mapsto p_{x,y}(\cdot|z) \in \PA(\Z)$\footnote{$\PA(\Z) :=$ the $|\Z|$-dimensional probability simplex. Here and elsewhere, we shall denote by $\PA(\cdots)$ the space of probability measures on a Polish space `$\cdots$' with the Prohorov topology.} is Lipschitz.  We assume that the transition probability function $p_{x,y}(\cdot | \cdot)$ is irreducible $\forall \ x,y,$ and denote by $\pi_{x,y} \in \PA(\Z)$ its unique stationary distribution. Since $\pi_{x,y}$ is a vector of rational functions of the transition probabilities with non-vanishing denominators by Cramer's thorem, it will also be Lipschitz in $x,y$.

\begin{remark} We take $\Z$ to be finite for notational simplicity. More general state spaces or the Markov noise are possible, see the comments in the concluding section. \end{remark}

Define $\tau(0) = t(0) = 0, \tau(n+1) = \tau(n) + a(n), t(n+1) = t(n) + b(n),$ for $n \geq 0$. Define continuous and piecewise linear interpolations $\bar{x}(t), \bar{y}(t), t \geq 0,$ of $\{x(n)\}, \{y(n)\}$ resp.\ by:
\begin{equation*}
\begin{aligned}
\bar{x}(t) &= \Big( \frac{t-\tau(n)}{\tau(n+1)-\tau(n)} \Big) x(n)   + \Big( \frac{\tau(n+1)-t}{\tau(n+1)-\tau(n)} \Big) x(n+1)  \mbox{ for} \ \tau(n) \leq t \leq \tau(n+1), \\
\bar{y}(t) &= \Big( \frac{t-t(n)}{t(n+1)-t(n)}\Big) y(n)   + \Big( \frac{t(n+1)-t}{t(n+1)-t(n)} \Big) y(n+1), 
\mbox{ for} \ t(n) \leq t \leq t(n+1),
\end{aligned}
\end{equation*}
for $n \geq 0$. Fix $T > 0$. Define also solutions $x^n(t), \tau(n) \leq t \leq \tau(n) + T$,  of the ODEs 
\begin{equation}
\dot{x}_y^n(t) = \sum_z h(x^n_y(t), y, z)\pi_{x^n_y(t),y}(z), \ x^n_y(\tau(n)) = x(n), 
 t \in [\tau(n), \tau(n)+T], \label{ODE1}
\end{equation}
for $y= y(n) \in \R^s$ treated as a constant parameter. 

Then we have the following result. (See, e.g., \cite{Ben} or  sections 8.2-8.3 of \cite{BorkarBook}.)

\begin{lemma} Almost surely,
\begin{equation}
\lim_{n\to\infty}\sup_{t\in[t(n),t(n)+T]}\|\bar{x}(t) - x^n(t)\| = 0. \label{xlim}
\end{equation}
\end{lemma}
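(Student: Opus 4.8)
The plan is to treat \eqref{xlim} as an instance of the single--time--scale ODE method for stochastic approximation with Markov noise (as in \cite{Ben} and sections~8.2--8.3 of \cite{BorkarBook}), the only genuinely two--time--scale input being that, on the fast ($\tau(\cdot)$) time scale, the slow iterate $y(\cdot)$ moves negligibly over any window of length $T$ (the interval in \eqref{xlim} being read as $[\tau(n),\tau(n)+T]$, the domain on which $x^n(\cdot)$ is defined). Writing $\bar h(x,y):=\sum_z h(x,y,z)\pi_{x,y}(z)$, which is Lipschitz since $h$ and $\pi_{\cdot,\cdot}(z)$ are, the argument has three ingredients: (a) absorb the martingale noise $\{M(n)\}$ and the Markov noise $\{Z(n)\}$ into a perturbation of $\bar h$ that is window--wise asymptotically negligible; (b) show $\|y(m)-y(n)\|\to 0$ a.s., uniformly over $m$ with $\tau(m)\in[\tau(n),\tau(n)+T]$; (c) a Gronwall comparison of $\bar x(\cdot)$, viewed as a perturbed solution of $\dot x=\bar h(x,y(n))$ on $[\tau(n),\tau(n)+T]$, with the exact solution $x^n(\cdot)$ of that ODE with $y\equiv y(n)$.

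For (a): by \eqref{stable} the iterates are a.s.\ bounded, so along them $h,g$ (and $\hat h$ below) are bounded; together with \eqref{mgbd} and $\sum_n a(n)^2<\infty$ this makes $\zeta(n):=\sum_{k<n}a(k)M(k+1)$ an a.s.\ convergent martingale, so its increments over windows vanish. For the Markov noise, since $p_{x,y}(\cdot|\cdot)$ is irreducible on the finite set $\Z$ with stationary law $\pi_{x,y}$, the Poisson equation $\hat h(x,y,\cdot)-P_{x,y}\hat h(x,y,\cdot)=h(x,y,\cdot)-\bar h(x,y)$ has a solution $\hat h$ which, via Cramer's rule applied to the fundamental matrix $(I-P_{x,y}+\mathbf{1}\pi_{x,y})^{-1}$, is Lipschitz in $(x,y)$; substituting $h(x(n),y(n),Z(n))=\bar h(x(n),y(n))+[\hat h(x(n),y(n),Z(n))-(P_{x(n),y(n)}\hat h)(Z(n))]$ and telescoping the corrector in the usual way yields a convergent martingale plus terms bounded by constants times $a(k)^2$, $\|x(k+1)-x(k)\|\,a(k)$, $\|y(k+1)-y(k)\|$ and step--size differences, all controllable against the step sizes (this bookkeeping being the delicate part, carried out as in the cited references). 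The upshot is $x(n+1)=x(n)+a(n)[\bar h(x(n),y(n))+\varepsilon(n)]$ with $\sup\{\,\|\sum_{k=n}^{m-1}a(k)\varepsilon(k)\|:\tau(m)\le\tau(n)+T\,\}\to 0$ a.s.

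For (b): for such $m$, $\|y(m)-y(n)\|\le \sum_{k=n}^{m-1}b(k)\,\|g(x(k),y(k),Z(k))+M'(k+1)\|\le C\sum_{k=n}^{m-1}b(k)+\|\zeta'(m)-\zeta'(n)\|$ with $\zeta'(n):=\sum_{k<n}b(k)M'(k+1)$, which converges a.s.\ just as $\zeta$ did (using $\sum_n b(n)^2<\infty$); and $\sum_{k=n}^{m-1}b(k)\le\big(\sup_{k\ge n}b(k)/a(k)\big)\sum_{k=n}^{m-1}a(k)\le\big(\sup_{k\ge n}b(k)/a(k)\big)(T+\sup_{k\ge n}a(k))\to 0$ since $b(n)=o(a(n))$. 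For (c): write both $\bar x$ and $x^n$ as integral equations on $[\tau(n),\tau(n)+T]$ with integrand $\bar h$ evaluated at the respective trajectories; subtracting and using the Lipschitz property of $\bar h$ in both arguments, the bound $\sup\|y(m)-y(n)\|=o(1)$ from (b), the perturbation bound from (a), and the $O(\sum_{k\ge n}a(k)^2)$ interpolation--versus--iterate discrepancy, Gronwall's inequality gives $\sup_{t\in[\tau(n),\tau(n)+T]}\|\bar x(t)-x^n(t)\|\le o(1)\,e^{L_{\bar h}T}\to 0$, which is \eqref{xlim}.

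I expect the main obstacle to be step (a), and within it the Markov--noise reduction: verifying joint Lipschitz regularity of the Poisson solution $\hat h$ and, more pointedly, organizing the telescoped corrector so that the resulting error is negligible \emph{uniformly in $n$} over the moving windows $[\tau(n),\tau(n)+T]$ (one wants bounds in terms of tail quantities like $\sup_{k\ge n}a(k)$, $\sum_{k\ge n}a(k)^2$ and tail martingale increments, all $\to 0$). By contrast, step (b) --- the only place where $b(n)=o(a(n))$ enters --- is short, and step (c) is the routine Gronwall estimate common to all ODE--method arguments.
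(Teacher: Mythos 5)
Your proposal is correct and takes essentially the same route as the paper, which only gives a sketch deferring to the standard ODE method for stochastic approximation with Markov noise (\cite{Ben}, sections 8.2--8.3 of \cite{BorkarBook}) and notes, exactly as you do in step (b), that the only new ingredient is that $y(\cdot)$ is quasi-static over windows of length $T$ on the fast time scale. Your write-up instantiates one of the two cited approaches (the Poisson-equation corrector of \cite{Ben}), and your reading of the interval as $[\tau(n),\tau(n)+T]$ correctly resolves what appears to be a typo in the lemma's statement.
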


\begin{proof}{({\textit{Sketch}})} 
This follows by standard arguments for stochastic approximation with Markov noise, see, e.g., \cite{Ben} and  sections 8.2-8.3 of \cite{BorkarBook} for two alternative approaches. The only additional feature is the presence of $y(n)$ in the dynamics of $x^n(\cdot)$, which is the iterates $y(\cdot)$ on the slower time scale kept frozen at $y(n)$ according to the usual two time scale logic. \end{proof}

This will play a key role in the proof of our main result, Theorem \ref{fast} below.

\section{Asymptotics for the fast time scale}

Our arguments will be pathwise, a.s. Consider a sample path in the probability $1$ set $\Omega_0$ where \eqref{stable} holds. Thus, in particular, $C_X := \sup_n\|x(n)\| < \infty$ for this sample path. (Note that $C_X$ is random.) Let $B_X := \{x \in \R^d : \|x\| \leq C_X\}$. Let $m(n) := \min\{k \geq n : t(k) \geq t(n)+T\}$. Define a $\PA(\R^d\times\Z)$-valued process $\mu_t(\cdot), t \in [t(n),t(n)+T]$ by:
\begin{equation*}
\mu_t(A\times D) := \delta_{x(n+k)}\delta_{Z(n+k)} 
\mbox{ for} \ t(n+k) \leq t < t(n+k+1)\wedge T, 0 \leq k <  m(n),
\end{equation*}
where $\delta_x$ stands for the Dirac measure at $x$. Consider $\mu_{t(n) + \cdot}$ as a random variable taking values in the space $\M :=$ the space of measurable maps $[0,\infty) \mapsto \PA(\R^d\times\Z)$ with the coarsest topology that renders continuous the maps
$\mu \in \M \mapsto \int_0^T g(t)\int_{B_X\times\Z} fd\mu_tdt$ for any $f \in C(B_X\times\Z)$, $g \in L_2[0,T]$, and $T > 0.$ 
Then $\M$ is a compact Polish space, see, e.g., Lemma 5.3, p.\ 71, of \cite{BorkarBook} (see also \cite{Yuksel}).
Let $\mu_\cdot^*$ be any limit point in $\M$ of  $\mu_{t(n) +  \cdot}$ as $n\to\infty$. By dropping to a further subsequence if necessary, let $x(n) \to x^*, y(n) \to y^*$ along this subsequence. Our first key result is:

\begin{theorem}\label{fast} Almost surely, $\mu_t^*$ is of the form
$\mu^*_t(dx,z) = \eta^*(dx)\pi_{x,y}(z)$ where $\eta(\cdot)$ belongs to the compact convex set $\I_*$ of invariant distributions of the ODE
\begin{equation}
\dot{x}^*(t)  = h^*(x^*(t), y^*)  
 := \sum_z h(x^*(t), y^*, z)\pi_{x^*(t),y^*}(z), \ t \geq 0. \label{limODE}
\end{equation}
\end{theorem}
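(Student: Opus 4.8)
The plan is to prove the two assertions separately: first, that any $\M$-limit point $\mu^*_\cdot$ disintegrates as $\mu^*_t(dx,z)=\eta^*_t(dx)\,\pi_{x,y^*}(z)$ for some $\eta^*_t\in\PA(B_X)$ — i.e.\ the Markov noise relaxes to its stationary law along the interpolated trajectory — and second, that the $x$-marginal $\eta^*_t$ lies in $\I_*$. That $\I_*$ is nonempty, convex and compact is routine: $B_X$ supports an invariant measure of \eqref{limODE} by Krylov--Bogolyubov (any invariant measure putting full mass on $B_X$ is carried by an invariant subset of $B_X$, so the flow leaving $B_X$ is immaterial), convexity is immediate, and compactness follows from weak-$*$ compactness of $\PA(B_X)$ together with closedness of the constraint $\int\langle\nabla f,h^*(\cdot,y^*)\rangle\,d\eta=0$, $f\in C^1_c$. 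All arguments are pathwise on $\Omega_0$.

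For the disintegration, fix $q\colon\Z\to\R$ and $\phi\in C_b(B_X)$ and let $\hat q_{x,y}$ solve the Poisson equation $(I-p_{x,y})\hat q_{x,y}=q-\pi_{x,y}(q)$; by irreducibility and Cramer's rule $\hat q_{x,y}$ is bounded and Lipschitz in $(x,y)$. Decomposing $q(Z(k))-\pi_{x(k),y(k)}(q)=\big(\hat q_{x(k),y(k)}(Z(k))-\hat q_{x(k),y(k)}(Z(k+1))\big)+\big(\hat q_{x(k),y(k)}(Z(k+1))-(p_{x(k),y(k)}\hat q_{x(k),y(k)})(Z(k))\big)$, multiplying by $b(k)\phi(x(k))$ and summing over $n\le k<m(n)$: the second group is a sum of martingale differences with steps $O(b(k))$, hence $o(1)$ since $\sum_k b(k)^2<\infty$; the first group telescopes, the residual increments $\hat q_{x(k),y(k)}-\hat q_{x(k+1),y(k+1)}=O(\|x(k+1)-x(k)\|+\|y(k+1)-y(k)\|)=O(a(k))$ contributing $O\!\big(\sum_k a(k)b(k)\big)=o(1)$ by Cauchy--Schwarz, and the boundary terms being $O(\max_k b(k))=o(1)$. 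Since $\pi_{x(k),y(k)}(q)-\pi_{x(k),y^*}(q)=O(\|y(k)-y^*\|)$ is $o(1)$ uniformly over the window (all indices there tend to $\infty$ and $y(n)\to y^*$; see below), passing to the $\M$-limit gives $\int_0^T g(t)\int\phi(x)\big(q(z)-\pi_{x,y^*}(q)\big)\,\mu^*_t(dx,z)\,dt=0$ for all $g\in L_2[0,T]$, and a monotone-class argument yields the claim.

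For $\eta^*_t\in\I_*$, the crucial point is $b(n)=o(a(n))$: the slow-scale window $[t(n),t(n)+T]$ underlying $\mu_{t(n)+\cdot}$ spans a \emph{fast}-scale horizon $R_n=\tau(m(n))-\tau(n)=\sum_{n\le k<m(n)}a(k)\to\infty$, since $\sum_{n\le k<m(n)}b(k)\to T$ while $a(k)/b(k)\to\infty$. On any fixed-length fast-scale sub-window, Lemma~\eqref{xlim} combined with continuous dependence of ODE solutions on the vector field and initial data (using $x(n)\to x^*$, $y(n)\to y^*$ and Lipschitz continuity of $h$ and of $(x,y)\mapsto\pi_{x,y}$, so that the right side of \eqref{ODE1} converges uniformly on $B_X$ to $h^*(\cdot,y^*)$) shows that $\bar x$ asymptotically follows the flow $\Phi^{y^*}$ of \eqref{limODE}; concatenating sub-windows, $\bar x(\tau(n)+\cdot)$ is an asymptotic pseudotrajectory of $\Phi^{y^*}$ over $[0,R_n]$. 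Writing $\int_0^T g(t)\int\phi\,d\mu^*_t\,dt$ (with $z$ already averaged out) as a time-average of $\phi(\bar x(\cdot))$ along this growing trajectory — the slow-to-fast reparametrization inserting the slowly varying weights $b(k)=(b(k)/a(k))\,a(k)$, which, $\I_*$ being convex, do not affect membership in $\I_*$ — and invoking the standard characterization of limit sets of such time-averaged occupation measures as invariant measures of the flow (\cite{Benaim1}, \cite{Benaim2}, or Chapter~2 of \cite{BorkarBook}) gives $\eta^*_t\in\I_*$.

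The main obstacle is reconciling these two reductions: the fast trajectory must be controlled over a horizon $R_n\to\infty$, yet the frozen-parameter ODE \eqref{limODE} is only legitimate while $\bar y$ stays near $y^*$, and over a full window $\bar y$ may drift by $\Theta(T)$. The remedy is to run the second argument over short slow-scale sub-windows $[t(n),t(n)+\delta]$ — still long enough in fast time that the occupation measure enters the invariant measures of $h^*(\cdot,y)$ for $y$ in a $\delta$-ball about $y^*$, yet short enough that $\bar y$ moves only $O(\delta)$ — and then let $\delta\downarrow0$, using upper semicontinuity of the set of invariant measures in the frozen parameter and compactness of $\I_*$. This same localization near $t=0$ (where genuinely $\bar y(t(n))=y(n)\to y^*$) is what justifies the use of $y^*$ in the disintegration step, and, the fast dynamics having equilibrated, forces $\mu^*_\cdot$ to be constant in $t$, giving the stated $t$-free $\eta^*$.
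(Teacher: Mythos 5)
Your proof is essentially correct, but it reaches the conclusion by a genuinely different route than the paper. The paper's proof has two ingredients: (i) a telescoping/Taylor argument showing that $\sum_{k=n}^{m(n)}b(k)\langle\nabla f(x(k)),h(x(k),y(k),Z(k))\rangle\to 0$ because $f(x(m(n)))-f(x(n))$, reweighted by the vanishing ratios $b(k)/a(k)$, is negligible and the martingale part converges; and (ii) recognition of this sum as $\int\int\langle\nabla f,h\rangle\,d\mu_t\,dt$, so that every limit point satisfies $\int\sum_z\langle\nabla f(x),h(x,y,z)\rangle\,\mu^*_t(dx,z)=0$, after which invariance of the $x$-marginal is obtained by invoking the Echeverria--Stockbridge criterion (a measure annihilating the generator on smooth test functions is invariant). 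You instead (a) prove the disintegration $\mu^*_t(dx,z)=\eta^*_t(dx)\pi_{x,y}(z)$ by a Poisson-equation/martingale decomposition of $q(Z(k))-\pi_{x(k),y(k)}(q)$ --- this is actually more careful than the paper, which only asserts the disintegration "by direct verification" (note that $\pi_{x(n),y(n)}$ is the \emph{stationary} law of the frozen kernel, not the conditional law of $Z(n)$, so some averaging argument like yours really is needed); and (b) prove invariance of $\eta^*_t$ by the dynamical-systems route: the slow window has diverging fast-time length, $\bar x$ is an asymptotic pseudotrajectory of the frozen flow, and Krylov--Bogolyubov-type limits of occupation measures along such trajectories are invariant, with the $b(k)$-versus-$a(k)$ weighting and the $O(T)$ drift of $\bar y$ handled by blocking into slow sub-windows of length $\delta$, convexity of $\I_*$, and upper semicontinuity of $y\mapsto J_y$. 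What each approach buys: the generator/Echeverria route needs only the single averaged identity and sidesteps all trajectory-tracking over growing horizons, so it is shorter once Stockbridge's theorem is granted; your route is self-contained at the level of dynamical systems, makes the role of $b(n)=o(a(n))$ transparent (the fast horizon $R_n\to\infty$), and repairs the disintegration step. Two small caveats: your closing claim that the localization "forces $\mu^*_\cdot$ to be constant in $t$" does not follow from what precedes it and is not needed --- the paper's own proof produces a $t$-dependent $\eta_t$ (invariant for the ODE frozen at $y^*(t)$), and the $t$-free $\eta^*$ in the theorem statement should be read as notational shorthand; and your Abel summation of the telescoping term implicitly uses $\sum_k|b(k)-b(k-1)|\to 0$ over the window (e.g.\ eventual monotonicity of $b(\cdot)$), a standard but unstated technicality.
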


\begin{remark}
Note that the ODE \eqref{limODE} is well posed because the map $x \mapsto  \sum_z h(x, y^*, z)\pi_{x,y^*}(z)$ is Lipschitz. \end{remark}

\begin{proof} Consider a fixed sample path as above. Fix $T>0$. Let $f$ be a smooth compactly supported function on $B_X$. Let 
\[
m(n) := \min\{k \geq n : \sum_{\ell = n}^k t(\ell) \geq  t(n) + T\}.
\]
Then, since $\sum_{k=n}^{m(n)}a(k) \approx T$, we have
\begin{align}
| f(x(t(m(n)))    - f(t(n))|   &  \leq \max_{w\in B_X}\|\nabla f(w)\|\left(\left\|\sum_{k=n}^{m(n)}b(k)x(k)\right\| \right)  \nonumber \\
& \leq    \max_{w\in B_X}\|\nabla f(w)\|\Bigg(\max_{\{n\leq k \leq m(n)\}} \left(\frac{b(k)}{a(k)}\right)\times 
 \max_k\|x(k)\|\left\|\sum_{k=n}^{m(n)}a(k)\right\|\Bigg)  \nonumber \\
&\leq  K\max_{\{n\leq k \leq m(n)\}}\left(\frac{b(k)}{a(k)}\right) \to 0 \label{gotozero}
\end{align}
almost surely, where $K > 0$ is a possibly random finite constant. On the other hand, using the first order Taylor expansion, we have
\begin{align}
f(x(t(m(n)))   - f(t(n))   &=  \sum_{k=n}^{m(n)}b(k)\langle\nabla f(x(k)), x(k+1) - x(k) \rangle + o(n)  \nonumber \\
&= \sum_{k=n}^{m(n)}b(k)\langle\nabla f(x(k)), h(x(k),y(k),Z(k)) \rangle  + \ \sum_{k=n}^{m(n)}b(k)\langle\nabla f(x(k)), M(k+1) \rangle + o(n). \label{bound1}
\end{align}
Defining
$$W(n) := \sum_{m=0}^{n-1}b(k)\langle\nabla f(x(k)), M(k+1) \rangle, \ n \geq 0,$$
$(W(n), \F_n)$ is seen to be a square integrable martingale with  quadratic variation
\begin{equation*}
\langle W \rangle(n) \leq C\left(1+\sup_\ell\|x(\ell)\|^2 + \|y(\ell)\|^2\right)  \left(\sum_{k=0}^\infty b(k)^2\right) 
 < \infty \ \ \mbox{a.s.}
\end{equation*}
for a suitable constant $C > 0$, in view of \eqref{mgbd}. Hence by \eqref{stable} and Proposition VII.3.2(a), p.\ 149, of \cite{Neveu}, $W(n)$ converges a.s.\ as $n\to\infty$. 
Therefore the second term on the right in \eqref{bound1} converges to zero, a.s. By enlarging the zero probability set $\Omega^c_0$ if necessary, we assume that this holds true for the chosen sample path. On the other hand, the left hand side of \eqref{gotozero} tends to $0$ a.s.\ as $n\to\infty$ by \eqref{gotozero}. Thus the first term on the right of \eqref{bound1} also tends to zero a.s.\ as $n\to\infty$. In view of our definition of $\{\mu_t\}$, it then follows that
$$\int_{t(n)}^{t(n) + T}\int_{\R^{d}}\sum_{z\in\Z}\langle \nabla f(x), h(x,y(t),z)\rangle\mu_t(dx,z)dt \to 0.$$
It follows that outside a set of zero probability, every limit point $(\mu^*_{\cdot}, y^*(\cdot))$ of $(\mu_{t+\cdot}, y(t+\cdot))$ in $\M\times C([0,\infty);\R^s)$ satisfies:
$$\int_0^T\int_{\R^{d}}\sum_{z\in\Z}\langle \nabla f(x), h(x,y^*(t),z)\rangle\mu^*_t(dx,z)dt = 0.$$
Since $T > 0$ was arbitrary, we can conclude that
$$\int_{\R^{d}}\sum_{z\in\Z}\langle \nabla f(x), h(x,y^*(t),z)\rangle\mu^*_t(dx,z) = 0 \ \forall \ t.$$
Setting $y(t) = y$, disintegrate $\mu^*_t$ as 
$$\mu^*_t(dx,z) = \eta_t(dx)\pi_{x,y}(z).$$
Here the fact that the regular conditional law is precisely $\pi_{x,y}(\cdot)$ follows by direct verification from the fact that $\pi_{x(n),y(n)}(\cdot)$ is the regular conditional law of $Z(n)$ given $x(n), y(n)$ for all $n$, and the map $(x,y) \mapsto \pi_{x,y}(\cdot)$ is continuous. 
From Theorem 4.1 of \cite{Stockbridge}, it follows that $\eta_t$ is a stationary distribution for the ODE
$$\dot{x}_y(t') = \sum_{z\in\Z}h(x_y(t'), y,z)\pi_{x(t'),y}(z), \ t' \geq 0,$$
where we have kept $t \geq 0$ and $y = y(t)$ fixed. This proves the claim. \end{proof}

\begin{remark}\label{Stock}  Controlled ODEs are a rather special and degenerate case of the much more general formalism of \cite{Stockbridge}. \end{remark}
\section{Asymptotics for the slow time scale}

Define the solutions $y^t(s), t \leq s \leq t + T$,  of the ODEs 
\begin{equation}
\dot{y}^t(s) = \int_{\R^d}\sum_{z \in \Z} g(x, y^t(s), z)\mu_s(dx,z), \ s\in [t, t+T]. \label{ODE2}
\end{equation}
  In view of the foregoing, the following is immediate.
  
 \begin{theorem} Almost surely, 
 $$\lim_{t\uparrow\infty}\sup_{s\in [t, t + T]}\|\bar{y}(s) - y^t(s)\| = 0 \ \mbox{a.s.,}$$
 where $y^t(\cdot)$ is a solution to the ODE
\begin{equation}
\dot{y}^t(s) = \int_{\R^{d_1}}\sum_{z\in\Z}g(x,y^t(s),z)\eta_{y^t(s)}(dx), \ t \in[t, t+T]. \label{yODE}
\end{equation}
\end{theorem}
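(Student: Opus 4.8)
The plan is to first establish a tracking lemma for the slow clock $\{t(n)\}$ --- the analogue for $\bar y(\cdot)$ and \eqref{ODE2} of what \eqref{xlim} is for $\bar x(\cdot)$ and \eqref{ODE1} --- and then feed Theorem \ref{fast} into the resulting limiting ODE; crucially, the averaging of the Markov noise $Z(\cdot)$ out of the slow dynamics has already been carried out in Theorem \ref{fast}. \emph{Step 1: $\bar y(\cdot)$ tracks \eqref{ODE2}.} First I would expand, over a window starting at $t(n)$,
\begin{equation*}
y(n+k)=y(n)+\sum_{j=0}^{k-1}b(n+j)\,g(x(n+j),y(n+j),Z(n+j))+\sum_{j=0}^{k-1}b(n+j)\,M'(n+j+1).
\end{equation*}
The martingale term is the increment of $W'(n):=\sum_{m<n}b(m)M'(m+1)$, whose quadratic variation is a.s.\ finite by \eqref{mgbd} and \eqref{stable}; so, by the same appeal to Proposition VII.3.2(a) of \cite{Neveu} made for $W(\cdot)$ in the proof of Theorem \ref{fast}, $W'(\cdot)$ converges a.s.\ and this term is $o(1)$ over the window. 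Since $\mu_s$ is by construction the string of Diracs $\delta_{x(n+j)}\delta_{Z(n+j)}$ carried along the slow grid (so $[t(n+j),t(n+j+1))$ has length $b(n+j)$), the first sum differs from $\int_{t(n)}^{t(n+k)}\int_{\R^d}\sum_{z\in\Z}g(x,\bar y(s),z)\mu_s(dx,z)\,ds$ only through the replacement of $\bar y(s)$ by $y(n+j)$ on each subinterval, an error that is a tail sum of an a.s.\ convergent series by the Lipschitz property of $g$, \eqref{mgbd}, \eqref{stable} and $\sum_m b(m)^2<\infty$, hence $o(1)$. Thus $\bar y(\cdot)$ satisfies the integral form of \eqref{ODE2} on $[t(n),t(n)+T]$ up to a uniform $o(1)$ error as $n\to\infty$; since the vector field in \eqref{ODE2} is Lipschitz in its running argument (with constant no larger than that of $g$, as $\mu_s$ is a probability measure), Gronwall gives $\sup_{s\in[t,t+T]}\|\bar y(s)-y^t(s)\|\to 0$ with $y^t(\cdot)$ the solution of \eqref{ODE2} started at $\bar y(t)$.

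\emph{Step 2: identify \eqref{ODE2} with \eqref{yODE}.} I would then pick $n_j\uparrow\infty$ along which $(\mu_{t(n_j)+\cdot},\bar y(t(n_j)+\cdot))\to(\mu^*_\cdot,y^*(\cdot))$ in $\M\times C([0,\infty);\R^s)$, available by compactness of $\M$ and equicontinuity of the slow interpolants on the chosen path. The limit passage in the integral form of \eqref{ODE2} is licensed because $\mu\mapsto\int_0^T\!\!\int_{\R^d}\sum_{z\in\Z}g(x,\cdot,z)\,\mu_s(dx,z)\,ds$ is continuous on $\M$ for the very topology $\M$ was endowed with, $g$ being bounded and Lipschitz on the compact $B_X\times\Z$; hence $y^*(\cdot)$ solves $\dot y^*(s)=\int_{\R^d}\sum_{z\in\Z}g(x,y^*(s),z)\,\mu^*_s(dx,z)$. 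Feeding in Theorem \ref{fast}, $\mu^*_s(dx,z)=\eta_s(dx)\pi_{x,y^*(s)}(z)$ with each $\eta_s$ in the invariant set $\I_*$ of \eqref{limODE} at parameter value $y^*(s)$; writing $\eta_{y^*(s)}$ for $\eta_s$ turns this into \eqref{yODE}. Since $T>0$ and the subsequence were arbitrary, the claim follows, with the understanding that --- absent a unique asymptotically stable equilibrium --- $\eta_{y^*(s)}$ need not be determined by $y^*(s)$ alone, so \eqref{yODE} is to be read as holding for the measurable selection $s\mapsto\eta_s$ extracted along the limit trajectory (equivalently, $\bar y(\cdot)$ is an asymptotic pseudotrajectory of the differential inclusion obtained by letting $\eta$ range over $\I_*$).

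I expect the main obstacle to be the bookkeeping in Step 1: making the discretization and martingale errors uniformly $o(1)$ over \emph{moving} windows $[t(n),t(n)+T]$, so that \eqref{ODE2} may legitimately be treated as a Carath\'eodory ODE whose vector field depends both on the process $\mu_s$ (measurable, not continuous, in $s$) and on the running state; and, secondarily, checking that the topology on $\M$ is rich enough for the limit passage in Step 2 with the particular integrand $\sum_{z\in\Z}g(x,\cdot,z)$. Neither is deep --- both are covered by the machinery of \cite{Ben} and Sections 8.2--8.3 of \cite{BorkarBook} already invoked for \eqref{xlim} --- which is why the statement can be called ``immediate'' once Theorem \ref{fast} is in hand.
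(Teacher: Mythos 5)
Your proposal is correct and follows essentially the same route as the paper: rewrite the slow iteration in integral form against the empirical measure process $\mu_t$ (absorbing the martingale and discretization errors as $o(1)$ over the window), then pass to a subsequential limit in $\M\times C([0,\infty);\R^s)$ and invoke Theorem \ref{fast} to identify the limiting measure as $\eta(dx)\pi_{x,y}(z)$ with $\eta$ invariant for \eqref{limODE}. The paper's proof is just a terser version of your two steps; your closing caveat about the non-uniqueness of $\eta_{y^*(s)}$ is exactly the point the paper addresses immediately afterwards by passing to the differential inclusion \eqref{DiffIncl}.
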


\begin{proof} Note that $\bar{y}(\cdot)$ satisfies
\begin{equation*}
\begin{aligned}
\bar{y}(n  + k)   &  = \bar{y}(t(n)) + \sum_{m=0}^{k-1}b(n+m )  \Big(g(\bar{x}(t(n) + m), \bar{y}(t(n) + m), Z(n+m))  
+ \ M'(m+1)\Big) \\
& = \bar{y}(t(n))   + \int_{t(n)}^{t(n)+k}\int_{R^{d}}\sum_{z\in\Z}g(x,\bar{y}(n),z) \mu_t(dx,z)dt  + o(n)
\end{aligned}
\end{equation*}
for $0 \leq k <t(m(n))-t(n)$. Passing to the limit as $n\to\infty$ along a suitable subsequence, the characterization of the limiting ODE follows in view of Theorem \ref{fast} above. 
\end{proof}
 
 The catch here is that the probability measure $\eta$ in \eqref{limODE} may not be unique. We do, however, have the following.
 
 \begin{lemma} The set of invariant probability measures $J_y$ for \eqref{limODE} is nonempty compact and convex for each fixed $y \in \R^s$. In addition, the set valued map $y \mapsto J_y \subset \PA(\R^d)$ is upper semicontinuous in the sense that its graph $\{(y, \eta_y) : y \in \R^s\}$ is closed in $\R^s\times\PA(R^d)$. \end{lemma}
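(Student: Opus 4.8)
The plan is to fix $y\in\R^s$ and work with the flow of \eqref{limODE} with $y^*$ replaced by this $y$. Since $x\mapsto h^*(x,y)=\sum_z h(x,y,z)\pi_{x,y}(z)$ is Lipschitz (cf.\ the Remark following Theorem~\ref{fast}), \eqref{limODE} generates a flow $\Phi^y_t:\R^d\to\R^d$, $t\ge0$, jointly continuous in $(t,x)$; moreover $(x,y)\mapsto h^*(x,y)$ is Lipschitz on bounded sets, so Gronwall's inequality furnishes, for each compact $B\subset\R^d$ and $t\ge0$, a finite $C_B(t)$ with
\[
\sup_{x\in B}\big\|\Phi^{y'}_t(x)-\Phi^y_t(x)\big\|\ \le\ C_B(t)\,\sup_{x\in B}\big\|h^*(x,y')-h^*(x,y)\big\|\ \longrightarrow\ 0\quad\text{as }y'\to y .
\]
By definition $\eta\in J_y$ exactly when $\int_{\R^d}f(\Phi^y_t(x))\,\eta(dx)=\int_{\R^d}f(x)\,\eta(dx)$ for all $t\ge0$ and $f\in C_b(\R^d)$. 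Each of these is an affine constraint on $\eta$, so $J_y$ is convex; and since $f\circ\Phi^y_t\in C_b(\R^d)$, each defines a weakly closed subset of $\PA(\R^d)$, so $J_y$ is weakly closed.

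For nonemptiness and compactness I would exploit the boundedness available here: it is enough (and is all that is used in the sequel) to consider invariant measures supported in the compact ball $B_X$ of \eqref{stable}, i.e.\ $J_y\subset\PA(B_X)$. Nonemptiness then follows from the Krylov--Bogolyubov construction: starting \eqref{limODE} from a point $x_0$ whose forward orbit stays in $B_X$ (such an $x_0$ is available in the present setting, since the bounded iterates shadow a solution of the frozen-$y$ fast ODE whose limit is an orbit of \eqref{limODE} confined to $B_X$ for the relevant $y$), form the occupation measures $\nu_T:=\tfrac1T\int_0^T\delta_{\Phi^y_t(x_0)}\,dt\in\PA(B_X)$, extract a weak limit $\nu$ along $T_k\uparrow\infty$ (possible as $\PA(B_X)$ is compact), and note that $\big|\int(f\circ\Phi^y_s)\,d\nu_T-\int f\,d\nu_T\big|\le 2\|f\|_\infty s/T\to0$, whence $\nu\in J_y$. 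Being weakly closed inside the compact set $\PA(B_X)$, $J_y$ is compact; convexity was already observed.

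For upper semicontinuity I would show the graph is closed: take $y_n\to y$, $\eta_n\in J_{y_n}$, $\eta_n\to\eta$ weakly, with all $\eta_n,\eta\in\PA(B_X)$. Fix $t\ge0$ and $f\in C_b(\R^d)$, and let $\omega$ be a modulus of uniform continuity for $f$ on a compact set containing every $\Phi^{y_n}_t(x)$ and $\Phi^y_t(x)$ with $x\in B_X$. Then
\[
\Big|\int f(\Phi^{y_n}_t(x))\,\eta_n(dx)-\int f(\Phi^{y}_t(x))\,\eta_n(dx)\Big|\ \le\ \omega\Big(\sup_{x\in B_X}\big\|\Phi^{y_n}_t(x)-\Phi^y_t(x)\big\|\Big)\ \to\ 0
\]
by the displayed flow estimate; also $\int f(\Phi^y_t(x))\,\eta_n(dx)\to\int f(\Phi^y_t(x))\,\eta(dx)$ by weak convergence (as $f\circ\Phi^y_t\in C_b(\R^d)$); and $\int f(\Phi^{y_n}_t(x))\,\eta_n(dx)=\int f\,d\eta_n\to\int f\,d\eta$ since $\eta_n\in J_{y_n}$. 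Combining the three limits gives $\int f(\Phi^y_t(x))\,\eta(dx)=\int f\,d\eta$; as $t,f$ were arbitrary, $\eta\in J_y$, so the graph of $y\mapsto J_y$ is closed in $\R^s\times\PA(\R^d)$.

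The routine parts are convexity (affineness of the invariance identities) and weak closedness. I expect the main obstacle to be the uniform-in-$y$ control: the continuous dependence of $\Phi^y_\cdot$ on the parameter $y$, handled by Gronwall once one notes that $h^*$ is jointly Lipschitz on bounded sets, and the confinement of all the measures to a single compact support $B_X$ (inherited from stability \eqref{stable}) so that Prohorov's theorem delivers both compactness and the weak limit used in the closed-graph step; the Krylov--Bogolyubov argument is then standard.
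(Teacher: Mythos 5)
Your proof is correct in substance but takes a genuinely different route from the paper's. The paper never touches the flow $\Phi^y_t$: it characterizes $J_y$ by the infinitesimal (Echeverria-type) condition $\int_{\R^d}\sum_z\langle\nabla f(x),h(x,y,z)\rangle\pi_{x,y}(z)\,\eta(dx)=0$ for smooth compactly supported $f$, citing Echeverria's theorem for the equivalence with invariance; since each such constraint is linear in $\eta$ and the integrand is a bounded continuous function of $x$ converging uniformly (on the compact support of $\nabla f$) as $y_n\to y_\infty$, convexity, closedness and the closed-graph property all follow in one line from weak convergence, and nonemptiness is delegated to the occupation-measure construction inside Theorem~\ref{fast}. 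You instead work with the semigroup identity $(\Phi^y_t)_*\eta=\eta$, proving nonemptiness by Krylov--Bogolyubov and upper semicontinuity via a Gronwall estimate for the parameter dependence of the flow. What your approach buys is self-containedness (no appeal to Echeverria or Stockbridge); what it costs is that the closed-graph step needs all the measures confined to a common compact set $B_X$ so that a uniform modulus of continuity for $f\circ\Phi^y_t$ is available, whereas the paper's test-function formulation handles arbitrary $\eta_n\in\PA(\R^d)$ (given a weak limit point) because the test functions, not the measures, carry the compactness. Two honest caveats, which apply equally to the paper's own proof: nonemptiness for \emph{every} $y\in\R^s$ really requires a bounded forward orbit of $\dot x=h^*(x,y)$, which both arguments only supply for the $y$ arising as limits of the iterates (your parenthetical ``for the relevant $y$'' tacitly concedes this); and your restriction of $J_y$ to $\PA(B_X)$ is a mild reinterpretation of the statement, justified by the fact that only such measures enter the differential inclusion in the sequel.
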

 
 \begin{proof} The  fact that there exists at least one invariant probability measure is already contained in the proof of Theorem \ref{fast}. On the other hand, the set of $\eta \in \PA(\R^d)$ satisfying
 the equation
 \begin{equation}
 \int_{\R^{d}}\sum_{z\in\Z}\langle \nabla f(x), h(x,y,z)\rangle\pi_{x,y}(z)\eta(dx) = 0, \label{Ech}
 \end{equation}
is closed and convex because the equation is preserved under convex combinations and convergence in $\PA(\R^d)$. The first claim now follows from the main theorem of \cite{Echeverria}. Let $y_n\to y_\infty$ in $\R^s$ and let $\eta_{y_n} \in J_{y_n}, n \ge 1$. Then setting $y = y_n$  in \eqref{Ech} and letting $n\uparrow\infty$, any limit point $\eta$ of $\eta_{y_n}$ as $n\uparrow\infty$ is seen to satisfy \eqref{Ech} with $y = y_\infty$ and therefore is an invariant probability measure for \eqref{limODE} with $y = y_\infty$. This completes the proof.  \end{proof}

\begin{remark} A remark similar to Remark \ref{Stock}  applies here  vis-a-vis ODEs and the results of \cite{Echeverria}. \end{remark}

In view of the potential non-uniqueness of $\eta_y$, we replace \eqref{yODE} by the differential inclusion
\begin{equation}
\dot{y}(t) = H(y(t)) \label{DiffIncl}
\end{equation}
 where 
 $$H(y) := \left\{\int_{\R^{d_1}}\sum_{z\in\Z}\pi_{x,y}(z)g(x,y,z)\eta(dx) : \eta \in J_y\right\}.$$
It is easy to see that the set-valued map $y \in \R^s \mapsto H(y) \subset \R^s$ is nonempty closed and convex valued because the set-valued map $y \in \R^s \mapsto J_y \subset \PA(\R^d)$ is.

We recall now some concepts related to differential inclusions from \cite{Benaim3}. A trajectory $y(\cdot)$ of \eqref{DiffIncl} is an absolutely continuous function $\R\mapsto \R^s$ such that $\dot{y}(t) = \kappa(t), t \in \R$ for some measurable $\kappa:\R\mapsto\R^s$ satisfying $\kappa(t)\in H(y(t))$ a.e. A set $A\subset \R^s$ is said to be invariant for \eqref{DiffIncl} if for any $x\in A$, there exists a trajectory $y(\cdot)$ of \eqref{DiffIncl} passing through $x$ such that $y(t) \in A \ \forall t\in\R$. A compact invariant set $A\subset\R^s$ is said to be internally chain transitive if given any $x,y\in A$ and any $\epsilon, T > 0$, one can find $x_i \in A, t_i > T, 1 \leq i \leq n,$ such that there exist trajectories $y^i(t), t \in [0,t_i],$ in $A$ for $1\leq i < n$ such that $\|y^i(0)-x_i\|<\epsilon$ and $\|y^i(t_i)-x_{i+1}\| < \epsilon$ for $1\leq i < n$. In this framework, our main result is the following.

\begin{theorem} Almost surely, as $t\to\infty$, every limit point of $\bar{y}(t + \cdot )$ in $C((-\infty,\infty);\R^s)$ is a trajectory in an internally chain transitive invariant set of \eqref{DiffIncl}. \end{theorem}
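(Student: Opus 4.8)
The plan is to establish, along the sample path under consideration, that $\bar{y}(\cdot)$ is a bounded \emph{asymptotic pseudotrajectory} of the differential inclusion \eqref{DiffIncl} in the sense of \cite{Benaim3}, and then to quote the limit-set characterization for such trajectories. As a preliminary step I would record that $H$ is a ``good'' set-valued map in the terminology of \cite{Benaim3}: it is nonempty, compact- and convex-valued (already noted in the text), bounded on bounded subsets of $\R^s$, and has closed graph. Boundedness on bounded sets follows from the linear growth of $g$, the fact that $\pi_{x,y}(\cdot)$ is a probability vector, and (on the a.s.\ event in force) the uniform support of the $x$-marginals of elements of $J_y$ in the fixed ball $B_X$. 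The closed graph property is exactly the upper semicontinuity statement in the Lemma above, combined with joint continuity of $(x,y)\mapsto \pi_{x,y}(z)g(x,y,z)$.

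Next I would verify the pseudotrajectory property itself. Stability \eqref{stable} gives $\sup_t\|\bar{y}(t)\| < \infty$, and the same growth bounds make $\dot{\bar y}$ essentially bounded, so $\bar y$ is Lipschitz and the shift family $\{\bar y(t+\cdot)\}_{t \ge 0}$ is relatively compact in $C((-\infty,\infty);\R^s)$, hence has limit points. The crucial input is the preceding theorem: for every $T > 0$, $\sup_{s\in[t,t+T]}\|\bar y(s) - y^t(s)\| \to 0$ as $t\uparrow\infty$, where $y^t(\cdot)$ solves \eqref{yODE} and is initialized to agree with $\bar y$ at time $t$. It then remains to observe that the restriction of $y^t(\cdot)$ to $[t,t+T]$ is a trajectory segment of \eqref{DiffIncl}: $y^t$ is absolutely continuous, and by Theorem \ref{fast} and the construction in its proof we have $\dot y^t(s) = \int_{\R^d}\sum_z g(x,y^t(s),z)\eta_s(dx)$ for a.e.\ $s$, where $\eta_s$ is the $x$-disintegration of the relevant limit point in $\M$ and hence lies in $J_{y^t(s)}$; therefore $\dot y^t(s) \in H(y^t(s))$ a.e. Thus $\bar y$ is uniformly approximated, over windows of arbitrary length, by genuine solutions of \eqref{DiffIncl} issued from $\bar y(t)$, which is precisely the asymptotic pseudotrajectory property (it suffices to check it along the grid $\{t(n)\}$, the limit points of the shifts being the same by equi-Lipschitzness).

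With these two facts in hand, the conclusion follows from the differential-inclusion analogue of the Bena\"{\i}m--Hirsch theorem in \cite{Benaim3}, namely the internally-chain-transitive-invariant-set limit theorem for bounded asymptotic pseudotrajectories of good differential inclusions: the limit set $L := \bigcap_{t\ge 0}\overline{\{\bar y(s) : s \ge t\}}$ is nonempty, compact, invariant and internally chain transitive for \eqref{DiffIncl}, and every limit point in $C((-\infty,\infty);\R^s)$ of the shifts $\bar y(t+\cdot)$ is an entire trajectory of \eqref{DiffIncl} taking values in $L$. This is exactly the assertion to be proved.

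The step I expect to be the crux is the identification of the limiting slow-scale trajectory $y^t(\cdot)$ as a bona fide solution of \eqref{DiffIncl}. This requires transferring the product disintegration $\mu^*_s(dx,z) = \eta_s(dx)\pi_{x,y}(z)$ of Theorem \ref{fast}, stated there at a single limit point $y^*$, to a moving window $s \mapsto y^t(s)$, with $s\mapsto\eta_s$ a measurable $J_{y^t(s)}$-valued selection, and checking that convergence in the topology of $\M$ is strong enough to pass to the limit inside $\int_{\R^d}\sum_z g(x,y^t(s),z)\,\mu_s(dx,z)\,ds$ --- which it is, since after the cutoff at $B_X$ this functional is of the form $\mu\mapsto\int_0^T \int_{B_X\times\Z} f\, d\mu_s\, ds$ with $f$ bounded and continuous. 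The potential non-uniqueness of $\eta_y$ is harmless precisely because we have passed from the ODE \eqref{yODE} to the inclusion \eqref{DiffIncl}.
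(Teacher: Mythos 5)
Your proposal is correct and follows essentially the same route as the paper, whose entire proof is a one-line appeal to Theorem 4.3 of \cite{Benaim3} (the limit-set theorem for bounded perturbed solutions of good differential inclusions). What you add --- verifying that $H$ is a Marchaud map, that $\bar{y}$ is an asymptotic pseudotrajectory via the preceding theorem, and that the limiting trajectories are genuine solutions of \eqref{DiffIncl} --- is exactly the checking of that theorem's hypotheses which the paper leaves implicit.
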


\begin{proof} This follows from Theorem 4.3 of \cite{Benaim3}. 
\end{proof}

\section{Extensions and future directions}

An obvious extension that is desirable for applications is to have a more general state space for the `Markov noise' process $\{Y_n\}$. Comparing with sections 8.2, 8.3 of \cite{BorkarBook}, it is clear that what this would require at the least is tightness of the laws of $\{Y_n\}$ and some regularity of the dependence of the transition kernel $p_{x,y}(\cdot|\cdot)$ on $x,y$ that reflects into a similar regularity of the corresponding stationary distribution, assuming it is unique. Non-uniqueness of the latter adds further complications.

As for future directions, here are some speculations. Given the existing work on small noise limits, it stands to reason that one would expect that one can restrict to a proper subset of $J_y$ and therefore of $H(y)$ in the foregoing. For example, the Freidlin-Wentzell theory  \cite{Freidlin} suggests that small noise limits of invariant probability distributions should concentrate on stable attractors of \eqref{limODE} that minimize the Freidlin-Wentzell quasi-potential. Unfortunately that seems too ambitious here. The reason is that the noise is added on the same time scale as the drift, i.e., on the time scale defined by the stepsizes $\{a(n)\}$. To get a stochastic differential equation limit, it would have to be weighted by $\sqrt{a(n)}$, and to get concentration on minimizers of the quasi-potential, it should be on an even slower time scale correwsponding to much more slowly decreasing weights, as suggested by the analysis in the special case of gradient systems in \cite{Gelfand}. Thus pinning down a selection principle for invariant probability measures remains a challenge here, though some very partial results are available (see, e.g., Chapter 3 of \cite{BorkarBook}). Such `Kolmogorov measures' are also of interest in dynamical systems theory \cite{Ruelle} and these connections need to be further explored, both in the present context and in other related contexts such as \cite{Bianchi}, \cite{BorkarShah}.






\end{document}